\documentclass[english,12pt]{article}
\usepackage[T1]{fontenc}
\usepackage[latin9]{inputenc}
\synctex=-1
\usepackage{color}
\usepackage{babel}
\usepackage{units}
\usepackage{bm}
\usepackage{amsmath}
\usepackage{amsthm}
\usepackage{amssymb}
\usepackage[unicode=true,
 bookmarks=true,bookmarksnumbered=false,bookmarksopen=false,
 breaklinks=false,pdfborder={0 0 1},backref=false,colorlinks=false]
 {hyperref}
\hypersetup{pdftitle={Local times for ggBm in higher dimensions},
 pdfauthor={Jos{\'e} Lu{\'\i}s da Silva},
 pdfsubject={Local times of gBm},
 pdfkeywords={Local times, grey Brownain motion},
 linkcolor=red, citecolor=red, urlcolor=blue, filecolor=blue}
\allowdisplaybreaks
\makeatletter
\theoremstyle{plain}
\newtheorem{thm}{\protect\theoremname}
  \theoremstyle{plain}
  \newtheorem{prop}[thm]{\protect\propositionname}
  \theoremstyle{definition}
  \newtheorem{defn}[thm]{\protect\definitionname}
  \theoremstyle{plain}
  \newtheorem{rem}[thm]{\protect\remarkname}
  \theoremstyle{plain}
  \newtheorem{lem}[thm]{\protect\lemmaname}
  \theoremstyle{plain}

\usepackage{color,pdfsync,graphics,lineno,hyperref}
\makeatother

  \providecommand{\conjecturename}{Conjecture}
  \providecommand{\definitionname}{Definition}
  \providecommand{\lemmaname}{Lemma}
  \providecommand{\propositionname}{Proposition}
  \providecommand{\remarkname}{Remark}
\providecommand{\theoremname}{Theorem}

\begin{document}

\title{Self-intersection local times for generalized grey Brownian motion in higher dimensions}

\author{\textbf{Jos{\'e} Lu{\'\i}s da Silva}\\
CIMA, University of Madeira, Campus da Penteada,\\
9020-105 Funchal, Portugal\\
Email: luis@uma.pt
\and
\textbf{Herry Pribawanto Suryawan}\\
Department of Mathematics, Sanata Dharma University\\
55281 Yogyakarta, Indonesia\\
and\\
Institut f{\"u}r Mathematik, Universit{\"a}t Z{\"u}rich\\
CH-8057, Z{\"u}rich, Switzerland\\
Email: herrypribs@usd.ac.id, herry.suryawan@math.uzh.ch
\and
\textbf{Wolfgang Bock}\\
Technische Universit{\"a}t Kaiserslautern,\\
Fachbereich Mathematik, Postfach 3049,\\
67653 Kaiserslautern, Germany\\
Email: bock@mathematik.uni-kl.de
}


\maketitle

\begin{abstract}
We prove that the self-intersection local times for generalized grey Brownian motion $B^{\beta,\alpha}$ in arbitrary dimension $d$ is a well defined object in a suitable distribution space for $d\alpha<2$.
\end{abstract}

\section{Introduction}
Intersection local times are an intensively studied object for about 80 years, see e.g.~\cite{levy}. Heuristically the self-intersection measures the time the process spends on its trajectory, i.e., it serves to count the self-crossings of the trajectory of a random process. In an informal way the self-intersection local time can be expressed by
\[
L(Y)\equiv \int d^2t\,\delta (Y(t_2)-Y(t_1)), 
\]
where $\delta$ is Donsker's - $\delta $-function and $Y$ a random process. Indeed the random variable $L$ is intended to sum up the contributions from each pair of ''times'' $t_1,t_2$
for which the process $Y$ is at the same point.
For Gaussian processes the self-intersection local time is defined as a rigorous object, see for the case of Brownian motion e.g.~\cite{ARHZ, bass, fcs, dvor2, he, imke, legall, lyons, sym, varadhan, watanabe, west, wolp, yor2} and e.g.~\cite{Rosen87, HN05, ayache2011, drumond-oliveira-silva08, GOSS10, meerschaert2008, Bock:2014wn} for fractional and multifractional Brownian motion. 
One framework which serves to give a mathematical sound meaning to the object above in the Gaussian setting is White Noise Analysis, see e.g.~\cite{HKPS93, Ob94, Kuo96}.   

For non-Gaussian processes in \cite{GJRS14}  a similar concept was used to establish the Mittag-Leffler Analysis.  The grey noise measure \cite{Sch92, MM09} is included as a special case in the class of Mittag-Leffler measures, which offers the possibility to apply the Mittag-Leffler analysis to fractional differential equations, in particular to fractional diffusion equations \cite{Schneider90, Sch92}, which carry numerous applications in science, like relaxation type differential equations or viscoelasticity. 

The corresponding grey Brownian motion (gBm) was introduced by W. Schneider as a model for slow anomalous diffusions, i.e., the marginal density function of the gBm is the fundamental solution of a time-fractional diffusion equation. This is a class $\left\{ B_{\beta}(t): t\ge 0, 0<\beta\le 1\right\}$ of stochastic processes which are self-similar with stationary increments. More recently, this class was extended to the, so called generalized grey Brownian motion (ggBm) to include slow and fast anomalous diffusions which contain either Gaussian or non-Gaussian processes e.g., grey Brownian motion and fractional Brownian motion. In this paper we study the existence of self-intersection local times of ggBm in dimension $d$.

In Section 2 we summarize the construction and basic properties of ggBm in dimension $d$. Section 3 contains the main result on the existence of self-intersection local times of a $d$-dimensional ggBm as a weak integral in a suitable stochastic distribution space.

\section{The Mittag-Leffler Measure}

\label{sec:ggBm_high_d}

Let $d\in\mathbb{N}$ and $L_{d}^{2}$
be the Hilbert space of vector-valued square integrable measurable
functions
\[
L_{d}^{2}:=L^{2}(\mathbb{R})\otimes\mathbb{R}^{d}.
\]
The space $L_{d}^{2}$ is unitary isomorphic to a direct sum of $d$
identical copies of $L^{2}:=L^{2}(\mathbb{R})$, (i.e., the space
of real-valued square integrable measurable functions with Lebesgue
measure). Any element $f\in L_{d}^{2}$ may be written in the form
\begin{equation}
f=(f_{1}\otimes e_{1},\ldots,f_{d}\otimes e_{d}),\label{eq:L2d_element}
\end{equation}
where $f_{i}\in L^{2}(\mathbb{R})$, $i=1,\ldots,d$ and $\{e_{1},\ldots,e_{d}\}$
denotes the canonical basis of $\mathbb{R}^{d}$. The inner product
in $L_{d}^{2}$ is given by
\[
(f,g)_{0}=\sum_{k=1}^{d}(f_{k},g_{k})_{L^{2}}=\sum_{k=1}^{d}\int_{\mathbb{R}}f_{k}(x)g_{k}(x)\,dx,
\]
where $g=(g_{1}\otimes e_{1},\ldots,g_{d}\otimes e_{d})$, $f_{k}\in L^{2}$,
$k=1,\ldots,d$, $f$ as given in (\ref{eq:L2d_element}). The corresponding
norm in $L_{d}^{2}$ is given by
\[
|f|_{0}^{2}:=\sum_{k=1}^{d}|f_{k}|_{L^{2}}^{2}=\sum_{k=1}^{d}\int_{\mathbb{R}}f_{k}^{2}(x)\,dx.
\]

As a densely embedded nuclear Fr\'echet space in $L_{d}^{2}$ we choose $S_{d}:=S(\mathbb{R})\otimes\mathbb{R}^{d}$,
where $S(\mathbb{R})$ is the Schwartz test function space. An element
$\varphi\in S_{d}$ has the form
\begin{equation}
\varphi=(\varphi_{1}\otimes e_{1},\ldots,\varphi_{d}\otimes e_{d}),\label{eq:test_function.}
\end{equation}
where $\varphi_{i}\in S(\mathbb{R})$, $i=1,\ldots,d$. Together with
the dual space $S'_{d}:=S'(\mathbb{R})\otimes\mathbb{R}^{d}$ we obtain
the basic Gel'fand triple
\[
S_{d}\subset L_{d}^{2}\subset S'_{d}.
\]
The dual pairing between $S'_{d}$ and $S_{d}$ is given as an extension
of the scalar product in $L_{d}^{2}$ by 
\[
\langle f,\varphi\rangle_{0}=\sum_{k=1}^{d}(f_{k},\varphi_{k})_{L^{2}},
\]
where $f$ and $\varphi$ as in (\ref{eq:L2d_element}) and (\ref{eq:test_function.}),
respectively. In $S'_{d}$ we choose the Borel $\sigma$-algebra $\mathcal{B}$
generated by the cylinder sets. 

Define the operator $M_{-}^{\nicefrac{\alpha}{2}}$ on $S(\mathbb{R})$
by
\[
M_{-}^{\nicefrac{\alpha}{2}}\varphi:=\begin{cases}
K_{\nicefrac{\alpha}{2}}D_{-}^{-(\alpha-1)/2}\varphi, & \alpha\in(0,1),\\
\varphi, & \alpha=1,\\
K_{\nicefrac{\alpha}{2}}I_{-}^{(\alpha-1)/2}\varphi, & \alpha\in(1,2),
\end{cases}
\]
where the normalization constant $K_{\nicefrac{\alpha}{2}}:=\sqrt{\alpha\sin(\nicefrac{\alpha\pi}{2})\Gamma(\alpha)}$
and $D_{-}^{r}$, $I_{-}^{r}$ denote the left-side fractional derivative
and fractional integral of order $r$ in the sense of Riemann-Liouville,
respectively:
\begin{eqnarray*}
(D_{-}^{r}f)(x)&=&\frac{-1}{\Gamma(1-r)}\frac{d}{dx}\int_x^{\infty}f(t)(t-x)^{-r}\, dt\\
(I_{-}^{r}f)(x)&=&\frac{1}{\Gamma(r)}\int_x^{\infty}f(t)(t-x)^{r-1}\, dt, \quad x\in \mathbb{R}.
\end{eqnarray*}
We refer to \cite{SKM1993} or \cite{KST2006} for the
details on these operators. It is possible to obtain a larger domain
of the operator $M_{-}^{\nicefrac{\alpha}{2}}$ in order to include
the indicator function $1\!\!1_{[0,t)}$ such that $M_{-}^{\nicefrac{\alpha}{2}}1\!\!1_{[0,t)}\in L^{2}$,
for the details we refer to Appendix A in \cite{GJ15}. We have the
following 
\begin{prop}[Corollary 3.5 in \cite{GJ15}]
For all $t,s\ge0$ and all $0<\alpha<2$ it holds that
\begin{equation}
\big(M_{-}^{\nicefrac{\alpha}{2}}1\!\!1_{[0,t)},M_{-}^{\nicefrac{\alpha}{2}}1\!\!1_{[0,s)}\big)_{L^{2}}=\frac{1}{2}\big(t^{\alpha}+s^{\alpha}-|t-s|^{\alpha}\big).\label{eq:alpha-inner-prod}
\end{equation}
\end{prop}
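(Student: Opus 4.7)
The plan is to prove the identity by reducing to the computation of $\bigl|M_{-}^{\alpha/2}1\!\!1_{[0,1)}\bigr|_{L^{2}}^{2}$ via the polarization identity together with translation invariance and scaling of the operator $M_{-}^{\alpha/2}$. Polarization gives
$$2\bigl(M_{-}^{\alpha/2}1\!\!1_{[0,t)},M_{-}^{\alpha/2}1\!\!1_{[0,s)}\bigr)_{L^{2}}=\bigl|M_{-}^{\alpha/2}1\!\!1_{[0,t)}\bigr|_{L^{2}}^{2}+\bigl|M_{-}^{\alpha/2}1\!\!1_{[0,s)}\bigr|_{L^{2}}^{2}-\bigl|M_{-}^{\alpha/2}(1\!\!1_{[0,t)}-1\!\!1_{[0,s)})\bigr|_{L^{2}}^{2}.$$
Since $1\!\!1_{[0,t)}-1\!\!1_{[0,s)}=\pm 1\!\!1_{[s\wedge t,s\vee t)}$ and the Riemann--Liouville operators $D_{-}^{r}$ and $I_{-}^{r}$ (and hence $M_{-}^{\alpha/2}$) commute with translations, as a trivial change of variables in their defining integrals shows, the third term equals $\bigl|M_{-}^{\alpha/2}1\!\!1_{[0,|t-s|)}\bigr|_{L^{2}}^{2}$. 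It therefore suffices to prove the one-parameter norm identity $\bigl|M_{-}^{\alpha/2}1\!\!1_{[0,u)}\bigr|_{L^{2}}^{2}=u^{\alpha}$ for every $u\ge 0$.

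For this I would exploit the scaling behaviour of the operator. A change of variables $t=uv$ in the defining integrals shows that, for $r>0$, $I_{-}^{r}$ applied to $f(\cdot/u)$ equals $u^{r}(I_{-}^{r}f)(\cdot/u)$, while $D_{-}^{r}$ produces a factor $u^{-r}$ instead. In both cases $\alpha\in(0,1)$ and $\alpha\in(1,2)$ this yields $M_{-}^{\alpha/2}(f(\cdot/u))=u^{(\alpha-1)/2}(M_{-}^{\alpha/2}f)(\cdot/u)$. Applying this to $f=1\!\!1_{[0,1)}$ and taking $L^{2}$ norms gives $\bigl|M_{-}^{\alpha/2}1\!\!1_{[0,u)}\bigr|_{L^{2}}^{2}=u^{\alpha}\bigl|M_{-}^{\alpha/2}1\!\!1_{[0,1)}\bigr|_{L^{2}}^{2}$, so the problem reduces to checking that $\bigl|M_{-}^{\alpha/2}1\!\!1_{[0,1)}\bigr|_{L^{2}}^{2}=1$.

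For this last step I would pass to Fourier space: both $D_{-}^{(1-\alpha)/2}$ (when $\alpha\in(0,1)$) and $I_{-}^{(\alpha-1)/2}$ (when $\alpha\in(1,2)$) act as Fourier multipliers whose symbols have modulus $|\xi|^{(1-\alpha)/2}$, so $M_{-}^{\alpha/2}$ has symbol of modulus $K_{\alpha/2}\,|\xi|^{(1-\alpha)/2}$. Plancherel's identity applied together with $\widehat{1\!\!1_{[0,1)}}(\xi)=(1-e^{-i\xi})/(i\xi)$ then reduces the problem to the classical integral
$$\int_{0}^{\infty}\frac{1-\cos u}{u^{\alpha+1}}\,du=\frac{\Gamma(1-\alpha)\cos(\pi\alpha/2)}{\alpha},$$
valid for $0<\alpha<2$ and obtained from a single integration by parts together with the Mellin transform of $\sin$. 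A final application of the reflection formula $\Gamma(\alpha)\Gamma(1-\alpha)=\pi/\sin(\pi\alpha)$ combined with $\sin(\pi\alpha)=2\sin(\pi\alpha/2)\cos(\pi\alpha/2)$ shows that the normalization $K_{\alpha/2}^{2}=\alpha\sin(\pi\alpha/2)\Gamma(\alpha)$ has been chosen precisely so that the answer is $1$. The main technical obstacle is justifying the Fourier-multiplier description on the rough function $1\!\!1_{[0,1)}\notin S(\mathbb{R})$ (one needs the distributional extension of $M_{-}^{\alpha/2}$ from Appendix~A of \cite{GJ15}); a fully elementary alternative is to compute $(M_{-}^{\alpha/2}1\!\!1_{[0,1)})(x)$ explicitly on the pieces $x<0$ and $0\le x<1$, producing combinations of $(1-x)^{(\alpha-1)/2}$ and $(-x)_{+}^{(\alpha-1)/2}$, and to carry out the resulting Beta-type integral directly.
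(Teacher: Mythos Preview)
The paper does not actually supply a proof of this proposition: it is quoted verbatim as Corollary~3.5 of \cite{GJ15} and then used as a tool, so there is no ``paper's own proof'' to compare against.  That said, your argument is a correct and self-contained derivation of the identity.  The reduction via polarization to the single-parameter norm $\bigl|M_{-}^{\alpha/2}1\!\!1_{[0,u)}\bigr|_{L^{2}}^{2}=u^{\alpha}$ is exactly the right move, and your use of translation invariance and homogeneity of $I_{-}^{r}$, $D_{-}^{r}$ is justified by the change of variables you indicate.  The Fourier computation at the end checks out: with the Plancherel convention $\|f\|_{L^{2}}^{2}=(2\pi)^{-1}\|\widehat{f}\|_{L^{2}}^{2}$ one obtains
\[
\bigl|M_{-}^{\alpha/2}1\!\!1_{[0,1)}\bigr|_{L^{2}}^{2}
=\frac{2K_{\alpha/2}^{2}}{\pi}\,\frac{\Gamma(1-\alpha)\cos(\pi\alpha/2)}{\alpha}
=\frac{\sin(\pi\alpha)\,\Gamma(\alpha)\Gamma(1-\alpha)}{\pi}=1,
\]
as you claim.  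Two small points worth tightening: first, the value $\alpha=1$ has to be treated separately (there $M_{-}^{1/2}$ is the identity and the result is trivial; your Fourier argument breaks down because $\Gamma(1-\alpha)$ has a pole which is, of course, cancelled by $\cos(\pi\alpha/2)$, but this deserves a word).  Second, as you already flag, the Fourier-multiplier description of $M_{-}^{\alpha/2}$ on $1\!\!1_{[0,1)}$ needs the extension of the operator beyond $S(\mathbb{R})$; the cleanest route is indeed to invoke the construction in Appendix~A of \cite{GJ15}, which is precisely what the paper does implicitly by citing that reference.
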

Note that this coincides with the covariance of the fractional Brownian motion with Hurst parameter $H=\frac{\alpha}{2}$.

In order to construct ggBm we will use the Mittag-Leffler function which is introduced by G.\ Mittag-Leffler
in a series of papers \cite{Mittag-Leffler1903,Mittag-Leffler1904,Mittag-Leffler1905}. 
\begin{defn}[Mittag-Leffler function]
\label{def:MLf} For $\beta>0$ the Mittag-Leffler function $E_{\beta}$ 
is defined as an entire function by the following series representation
\begin{equation}
E_{\beta}(z):=\sum_{n=0}^{\infty}\frac{z^{n}}{\Gamma(\beta n+1)},\quad z\in\mathbb{C},\label{eq:MLf}
\end{equation}
where $\Gamma$ denotes the Euler gamma function.
\end{defn}
\noindent Note that for $\beta=1$ the Mittag-Leffler function coincides with the classical exponential function. We also consider the so-called the $M$-Wright function $M_{\beta}$, $0<\beta\le1$ (in one variable)
defined by 
\[
M_{\beta}(z):=\sum_{n=0}^{\infty}\frac{(-z)^{n}}{n!\Gamma(-\beta n+1-\beta)}.
\]
For the choice $\beta=\nicefrac{1}{2}$ the corresponding $M$-Wright function reduces to the Gaussian density
\begin{equation}
M_{\nicefrac{1}{2}}(z)=\frac{1}{\sqrt{\pi}}\exp\left(-\frac{z^{2}}{4}\right).\label{eq:MWright_Gaussian}
\end{equation}
The Mittag-Leffler function $E_{\beta}$ and the $M$-Wright are related through the Laplace transform
\begin{equation}
\int_{0}^{\infty}e^{-s\tau}M_{\beta}(\tau)\,d\tau=E_{\beta}(-s).\label{eq:LaplaceT_MWf}
\end{equation}

The Mittag-Leffler measures $\mu_{\beta}$, $0<\beta\leq1$
is a family of probability measures on $S'_{d}$ whose characteristic
functions are given by Mittag-Leffler functions, see Definition\ \ref{def:MLf}.
Using the Bochner-Minlos theorem, see \cite{Gelfand-Vilenkin64} or \cite{H70},
we obtain the following definition.
\begin{defn}[cf.\ \cite{GJRS14}]
For any $\beta\in(0,1]$ the Mittag-Leffler measure is defined as
the unique probability measure $\mu_{\beta}$ on $S'_{d}$ by fixing
its characteristic functional
\begin{equation}
\int_{S'_{d}}e^{i\langle w,\varphi\rangle_{0}}\,d\mu_{\beta}(w)=E_{\beta}\left(-\frac{1}{2}|\varphi|_{0}^{2}\right),\quad\varphi\in S_{d}.\label{eq:ch-fc-gnm}
\end{equation}
\end{defn}

\begin{rem}
\label{rem:grey-noise-measure}
\begin{enumerate}
\item The measure $\mu_{\beta}$ is also called grey noise (reference) measure,
cf.\ \cite{GJRS14} and \cite{GJ15}.
\item The range $0<\beta\leq1$ ensures the complete monotonicity of $E_{\beta}(-x)$,
see Pollard \cite{Pollard48}, i.e., $(-1)^{n}E_{\beta}^{(n)}(-x)\ge0$
for all $x\ge0$ and $n\in\mathbb{N}_{0}:=\{0,1,2,\ldots\}.$ In other
words, it is sufficient to show that 
\[
S_{d}\ni\varphi\mapsto E_{\beta}\left(-\frac{1}{2}|\varphi|_{0}^{2}\right)\in\mathbb{R}
\]
is a characteristic function in $S_{d}$. 
\end{enumerate}
\end{rem}

By $L^{2}(\mu_{\beta}):=L^{2}(S'_{d},\mathcal{B},\mu_{\beta})$ we denote the complex Hilbert space of square integrable measurable
functions defined on $S'_{d}$ with scalar product
\[
(\!(F,G)\!)_{L^{2}(\mu_{\beta})}:=\int_{S'_{d}}F(w)\bar{G}(w)\,d\mu_{\beta}(w),\quad F,G\in L^{2}(\mu_{\beta}).
\]
The corresponding norm is denoted by $\|\cdot\|_{L^{2}(\mu_{\beta})}$.
It follows from (\ref{eq:ch-fc-gnm}) that all moments of $\mu_{\beta}$
exists and we have
\begin{lem}
\label{lem:gnm}For any $\varphi\in S_{d}$ and $n\in\mathbb{N}_{0}$
we have
\begin{align*}
\int_{S'_{d}}\langle w,\varphi\rangle_{0}^{2n+1}\,d\mu_{\beta}(w) & =0,\\
\int_{S'_{d}}\langle w,\varphi\rangle_{0}^{2n}\,d\mu_{\beta}(w) & =\frac{(2n)!}{2^{n}\Gamma(\beta n+1)}|\varphi|_{0}^{2n}.
\end{align*}
In particular, $\|\langle\cdot,\varphi\rangle\|_{L^{2}(\mu_{\beta})}^{2}=\frac{1}{\Gamma(\beta+1)}|\varphi|_{0}^{2}$
and by polarization for any $\varphi,\psi\in S_{d}$ we obtain
\[
\int_{S'_{d}}\langle w,\varphi\rangle_{0}\langle w,\psi\rangle_{0}\,d\mu_{\beta}(w)=\frac{1}{\Gamma(\beta+1)}\langle\varphi,\psi\rangle_{0}.
\]
\end{lem}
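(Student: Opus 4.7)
The plan is to derive all three identities from the characteristic functional (\ref{eq:ch-fc-gnm}) by introducing a real scaling parameter and matching Taylor coefficients. Concretely, I would replace $\varphi$ by $t\varphi$ for $t\in\mathbb{R}$ in (\ref{eq:ch-fc-gnm}) to obtain the one-parameter family
\[
\int_{S'_d} e^{it\langle w,\varphi\rangle_0}\,d\mu_\beta(w) = E_\beta\!\left(-\tfrac{t^2}{2}|\varphi|_0^2\right),
\]
valid for every $\varphi\in S_d$ and every $t\in\mathbb{R}$. The right-hand side is, by the defining series (\ref{eq:MLf}), equal to
\[
\sum_{n=0}^{\infty}\frac{(-1)^n}{2^n\Gamma(\beta n+1)}\,t^{2n}|\varphi|_0^{2n},
\]
and it contains only even powers of $t$.

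On the left-hand side I would expand $e^{ity}=\sum_k (ity)^k/k!$ and integrate termwise against $\mu_\beta$. Comparing coefficients of $t^k$ with the right-hand series, the absence of odd powers on the right immediately yields the vanishing of the odd moments, and the coefficient of $t^{2n}$, using $i^{2n}=(-1)^n$, yields
\[
\int_{S'_d}\langle w,\varphi\rangle_0^{2n}\,d\mu_\beta(w)=\frac{(2n)!}{2^n\Gamma(\beta n+1)}|\varphi|_0^{2n},
\]
which is the second identity of the lemma. The $L^2(\mu_\beta)$-norm formula is then the special case $n=1$. For the last equality I would polarize: apply the norm formula to $\varphi\pm\psi$ and subtract, using the parallelogram identity $|\varphi+\psi|_0^2-|\varphi-\psi|_0^2=4(\varphi,\psi)_0$.

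The only genuine technicality is the justification of the termwise integration in the expansion of $e^{it\langle w,\varphi\rangle_0}$, and this is where I would be most careful. The clean route is to note that the right-hand side $E_\beta(-t^2|\varphi|_0^2/2)$ is an entire function of $t$, so all moments exist (as already remarked just before the statement of the lemma) and the partial sums on the left are dominated uniformly in $t$ on compact sets by the absolutely convergent moment series, legitimizing dominated convergence. Equivalently, one may view both sides as real-analytic functions of $t$ and invoke uniqueness of Taylor coefficients. Once this interchange is in hand, the remainder of the argument is a routine coefficient comparison together with a one-line polarization.
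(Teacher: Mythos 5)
Your proof is correct and follows essentially the same route the paper intends: the lemma is stated there as a direct consequence of the characteristic functional (\ref{eq:ch-fc-gnm}) (with details deferred to \cite{GJRS14}), obtained exactly by the Taylor-coefficient comparison in the scaling parameter $t$, the evenness of $E_{\beta}(-t^{2}|\varphi|_{0}^{2}/2)$ killing the odd moments, and polarization for the bilinear identity. Your handling of the termwise integration (analyticity of the characteristic function at zero guaranteeing existence of all moments and validity of the moment expansion) is the standard justification and is adequate.
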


\section{Generalized grey Brownian motion in dimension $d$}

\label{subsec:ggBm}For any test function $\varphi\in S_{d}$ we define
the random variable
\[
X^{\beta}(\varphi):S'_{d}\longrightarrow\mathbb{R}^{d},\;w\mapsto X^{\beta}(\varphi,w):=\big(\langle w_{1},\varphi_{1}\rangle,\ldots,\langle w_{d},\varphi_{d}\rangle\big).
\]
The random variable $X^{\beta}(\varphi)$ has the following properties
which are a consequence of Lemma\ \ref{lem:gnm} and the characteristic
function of $\mu_{\beta}$ given in (\ref{eq:ch-fc-gnm}).
\begin{prop}
Let $\varphi,\psi\in S_{d}$, $k\in\mathbb{R}^{d}$ be given. Then
\begin{enumerate}
\item The characteristic function of $X^{\beta}(\varphi)$ is given by 
\begin{equation}
\mathbb{E}\big(e^{i(k,X^{\beta}(\varphi))}\big)=E_{\beta}\left(-\frac{1}{2}\sum_{j=1}^{d}k_{j}^{2}|\varphi_{j}|_{L^{2}}^{2}\right).\label{eq:characteristic-coord-proc}
\end{equation}
\item The characteristic function of the random variable $X^{\beta}(\varphi)-X^{\beta}(\psi)$
is
\begin{equation}
\mathbb{E}\big(e^{i(k,X^{\beta}(\varphi)-X^{\beta}(\psi))}\big)=E_{\beta}\left(-\frac{1}{2}\sum_{i=1}^{d}k_{j}^{2}|\varphi_{j}-\psi_{j}|_{L^{2}}^{2}\right).\label{eq:CF_increment}
\end{equation}
\item The expectation of the $X^{\beta}(\varphi)$ is zero and 
\begin{equation}
\|X^{\beta}(\varphi)\|_{L^{2}(\mu_{\beta})}^{2}=\frac{1}{\Gamma(\beta+1)}|\varphi|_{0}^{2}.\label{eq:variance.cood-proc}
\end{equation}
\item The moments of $X^{\beta}(\varphi)$ are given by
\begin{align*}
\int_{S'_{d}}\big|X^{\beta}(\varphi,w)\big|^{2n+1}\,d\mu_{\beta}(w) & =0,\\
\int_{S'_{d}}\big|X^{\beta}(\varphi,w)\big|^{2n}\,d\mu_{\beta}(w) & =\frac{(2n)!}{2^{n}\Gamma(\beta n+1)}|\varphi|_{0}^{2n}.
\end{align*}
\end{enumerate}
\end{prop}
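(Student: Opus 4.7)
The plan is to derive all four assertions as direct consequences of the defining characteristic functional (\ref{eq:ch-fc-gnm}) and the scalar moment identities already established in Lemma \ref{lem:gnm}. The crucial observation is that, although $X^{\beta}(\varphi)$ takes values in $\mathbb{R}^{d}$, the scalar pairing $(k,X^{\beta}(\varphi))=\sum_{j=1}^{d}k_{j}\langle w_{j},\varphi_{j}\rangle$ equals $\langle w,\tilde{\varphi}\rangle_{0}$ for the rescaled test function $\tilde{\varphi}:=(k_{1}\varphi_{1}\otimes e_{1},\ldots,k_{d}\varphi_{d}\otimes e_{d})\in S_{d}$, whose norm satisfies $|\tilde{\varphi}|_{0}^{2}=\sum_{j=1}^{d}k_{j}^{2}|\varphi_{j}|_{L^{2}}^{2}$. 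This single identification translates every claim into the framework of Lemma \ref{lem:gnm}.

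For item (1), I would substitute $\tilde{\varphi}$ into (\ref{eq:ch-fc-gnm}) to obtain the asserted Mittag-Leffler characteristic function. Item (2) is then immediate: by coordinate-wise linearity of the pairing, $X^{\beta}(\varphi)-X^{\beta}(\psi)=X^{\beta}(\varphi-\psi)$, so one applies item (1) with $\varphi-\psi$ in place of $\varphi$. For item (3), I would deduce vanishing of the mean of each component from the first identity in Lemma \ref{lem:gnm} applied to the padded test function $(0,\ldots,\varphi_{j}\otimes e_{j},\ldots,0)\in S_{d}$, and for the variance I would split
\[
\|X^{\beta}(\varphi)\|_{L^{2}(\mu_{\beta})}^{2}=\sum_{j=1}^{d}\int_{S'_{d}}\langle w_{j},\varphi_{j}\rangle^{2}\,d\mu_{\beta}(w),
\]
invoking the $n=1$ case of the even-moment identity of Lemma \ref{lem:gnm} on each summand and summing to recover $\tfrac{1}{\Gamma(\beta+1)}|\varphi|_{0}^{2}$.

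For item (4) the moment formulas again reduce to Lemma \ref{lem:gnm} via the same identification $\langle\cdot,\varphi\rangle_{0}\leftrightarrow X^{\beta}(\varphi)$: odd powers vanish by symmetry of $\mu_{\beta}$ (equivalently, by parity of the characteristic functional in $k$), while the even-power identity follows by expanding $E_{\beta}(-\tfrac{1}{2}|\varphi|_{0}^{2})$ in its power series (Definition \ref{def:MLf}) and matching coefficients in the Taylor expansion of the characteristic functional. Alternatively, using the Laplace transform relation (\ref{eq:LaplaceT_MWf}) one rewrites $\mu_{\beta}$ as a mixture of centred Gaussian measures against $M_{\beta}$, reducing the computation to a standard Gaussian moment integrated against the $M$-Wright density.

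None of the steps is technically deep; the argument is essentially bookkeeping that lifts the scalar identities of Lemma \ref{lem:gnm} to the vector-valued setting. The only point that requires a little care is item (4), where one must read the notation $|X^{\beta}(\varphi,w)|$ consistently with the form of the identity in Lemma \ref{lem:gnm} (so that the odd-power identity is meaningful); this is a notational convention and does not affect the substance of the proof.
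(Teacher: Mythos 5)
Your proposal is correct and follows exactly the route the paper intends: the paper gives no written proof, stating only that the proposition is ``a consequence of Lemma \ref{lem:gnm} and the characteristic function of $\mu_{\beta}$ given in (\ref{eq:ch-fc-gnm})'', and your identification $(k,X^{\beta}(\varphi))=\langle w,\tilde{\varphi}\rangle_{0}$ with $|\tilde{\varphi}|_{0}^{2}=\sum_{j}k_{j}^{2}|\varphi_{j}|_{L^{2}}^{2}$ is precisely the bookkeeping needed to make that assertion rigorous. You are also right to flag item (4): as printed, with $|\cdot|$ read as the Euclidean norm on $\mathbb{R}^{d}$, the odd-moment identity (and, for $d>1$, the even-moment formula) would fail, so the statement must be read as the scalar identity of Lemma \ref{lem:gnm} applied to $\langle\cdot,\varphi\rangle_{0}$, exactly as you note.
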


\begin{rem}\label{rem:l2_ggbm_ext}
\begin{enumerate}
\item The property (\ref{eq:variance.cood-proc}) of $X^{\beta}(\varphi)$
gives the possibility to extend the definition of $X^{\beta}$ to
any element in $L_{d}^{2}$, in fact, if $f\in L_{d}^{2}$, then there
exists a sequence $(\varphi_{k})_{k=1}^{\infty}\subset S_{d}$ such
that $\varphi_{k}\longrightarrow f$, $k\rightarrow\infty$ in the
norm of $L_{d}^{2}$. Hence, the sequence $\big(X^{\beta}(\varphi_{k})\big)_{k=1}^{\infty}\subset L^{2}(\mu_{\beta})$
forms a Cauchy sequence which converges to an element denoted by $X^{\beta}(f)\in L^{2}(\mu_{\beta})$. 
\item For $\beta=1$ property (\ref{eq:variance.cood-proc}) yields the It\^o isometry.
\end{enumerate}
\end{rem}

\noindent We define $1\!\!1_{[0,t)}\in L_{d}^{2}$, $t\ge0$, by
\[
1\!\!1_{[0,t)}:=(1\!\!1_{[0,t)}\otimes e_{1},\ldots,1\!\!1_{[0,t)}\otimes e_{d})
\]
 and consider the process $X^{\beta}(1\!\!1_{[0,t)})\in L^{2}(\mu_{\beta})$
such that the following definition makes sense.
\begin{defn}
For any $0<\alpha<2$ we define the process 
\begin{align}
S'_{d}\ni w\mapsto B^{\beta,\alpha}(t,w) & :=\big(\langle w,(M_{-}^{\nicefrac{\alpha}{2}}1\!\!1_{[0,t)})\otimes e_{1}\rangle,\ldots,\langle w,(M_{-}^{\nicefrac{\alpha}{2}}1\!\!1_{[0,t)})\otimes e_{d}\rangle\big)\nonumber \\
 & =\big(\langle w_{1},M_{-}^{\nicefrac{\alpha}{2}}1\!\!1_{[0,t)}\rangle,\ldots,\langle w_{d},M_{-}^{\nicefrac{\alpha}{2}}1\!\!1_{[0,t)}\rangle\big),\;t>0\label{eq:ggBm}
\end{align}
as an element in $L^{2}(\mu_{\beta})$. This process is called a version of $d$-dimensional
generalized grey Brownian motion (ggBm). Its characteristic function
has the form
\begin{equation}
\mathbb{E}\big(e^{i(k,B^{\beta,\alpha}(t))}\big)=E_{\beta}\left(-\frac{|k|^{2}}{2}t^{\alpha}\right),\;k\in\mathbb{R}^{d}.\label{eq:ch-fc-ggBm}
\end{equation}
\end{defn}

\begin{rem}
\begin{enumerate}
\item By Remark \ref{rem:l2_ggbm_ext} the $d$-dimensional ggBm exist as a $L^2(\mu_{\beta})$-limit and hence the map $S'_d\ni \omega \mapsto \langle \omega,1\!\!1_{[0,t)} \rangle$ yields a version of ggBm, $\mu_{\beta}-a.s.$, but not in the pathwise sense. 
\item For a fixed $0<\alpha<2$ one can show by the Kolmogorov-Centsov continuity theorem that the paths of the process are $\mu_{\beta}-a.s.$ continuous. 
\end{enumerate}
\end{rem}

\begin{prop}
For any $0<\alpha<2$, the process $B^{\beta,\alpha}:=\{B^{\beta,\alpha}(t),\;t\ge0\}$,
is $\nicefrac{\alpha}{2}$ self-similar with stationary increments.
\end{prop}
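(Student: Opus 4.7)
The plan is to read both properties off the finite-dimensional characteristic functions of $B^{\beta,\alpha}$, which (since the process takes values in $L^2(\mu_\beta)$) determine its law. Formula (\ref{eq:ch-fc-gnm}) turns every such joint characteristic function into a Mittag-Leffler function of an $L^2_d$-norm, and the identity (\ref{eq:alpha-inner-prod}) carries all the time-dependence.

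First I would compute, for $n\in\mathbb{N}$, $t_1,\ldots,t_n\ge 0$ and $k^{(1)},\ldots,k^{(n)}\in\mathbb{R}^d$, the joint characteristic function of $(B^{\beta,\alpha}(t_1),\ldots,B^{\beta,\alpha}(t_n))$. By linearity (together with the extension in Remark~\ref{rem:l2_ggbm_ext}), the exponent $i\sum_{j}(k^{(j)},B^{\beta,\alpha}(t_j))$ equals $i\langle w,\varphi\rangle_{0}$ for
\[
\varphi=\Big(\sum_{j=1}^n k_{1}^{(j)}M_{-}^{\nicefrac{\alpha}{2}}1\!\!1_{[0,t_{j})}\otimes e_{1},\ldots,\sum_{j=1}^n k_{d}^{(j)}M_{-}^{\nicefrac{\alpha}{2}}1\!\!1_{[0,t_{j})}\otimes e_{d}\Big),
\]
so that (\ref{eq:ch-fc-gnm}) and (\ref{eq:alpha-inner-prod}) combine to give
\[
\mathbb{E}\Big(e^{i\sum_j(k^{(j)},B^{\beta,\alpha}(t_j))}\Big)=E_{\beta}\!\Big(-\tfrac14\sum_{i=1}^{d}\sum_{j,l=1}^{n}k_{i}^{(j)}k_{i}^{(l)}\big(t_{j}^{\alpha}+t_{l}^{\alpha}-|t_{j}-t_{l}|^{\alpha}\big)\Big).
\]

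For $\nicefrac{\alpha}{2}$-self-similarity, I would substitute $t_j\mapsto at_j$ for $a>0$; the bracket becomes $a^{\alpha}(t_{j}^{\alpha}+t_{l}^{\alpha}-|t_{j}-t_{l}|^{\alpha})$, which is exactly what one obtains by substituting $k^{(j)}\mapsto a^{\nicefrac{\alpha}{2}}k^{(j)}$ instead. Hence the finite-dimensional distributions of $\{B^{\beta,\alpha}(at)\}_{t\ge 0}$ and $\{a^{\nicefrac{\alpha}{2}}B^{\beta,\alpha}(t)\}_{t\ge 0}$ coincide. For stationary increments I would rerun the same computation for $(B^{\beta,\alpha}(t_j+h)-B^{\beta,\alpha}(h))_{j=1}^n$; the test-function pieces are now $M_{-}^{\nicefrac{\alpha}{2}}(1\!\!1_{[0,t_j+h)}-1\!\!1_{[0,h)})$. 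Expanding the $L^{2}$ inner products by bilinearity produces four applications of (\ref{eq:alpha-inner-prod}); after the expansion the $h^{\alpha}$-contributions cancel and one is left with $\tfrac12(t_{j}^{\alpha}+t_{l}^{\alpha}-|t_{j}-t_{l}|^{\alpha})$, independent of $h$. This matches the joint characteristic function of $(B^{\beta,\alpha}(t_j))_{j=1}^n$ computed above (using $B^{\beta,\alpha}(0)=0$), completing the proof.

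The only nontrivial step is this cancellation in the stationary-increment calculation: it is mechanical but it is precisely where the specific form $\tfrac12(t^{\alpha}+s^{\alpha}-|t-s|^{\alpha})$ of the underlying covariance kernel is exploited. Everything else is a direct combination of (\ref{eq:ch-fc-gnm}) and (\ref{eq:alpha-inner-prod}).
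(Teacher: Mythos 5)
Your proposal is correct and follows essentially the same route as the paper: both reduce self-similarity and stationarity of increments to equalities of finite-dimensional characteristic functions, which via the Mittag-Leffler characteristic functional (\ref{eq:ch-fc-gnm}) become equalities of $L^{2}$-norms settled by the covariance identity (\ref{eq:alpha-inner-prod}). You merely make explicit the quadratic-form computation and the $h^{\alpha}$-cancellation that the paper dismisses as ``easy to show'' and ``verified analogously.''
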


\begin{proof}
Given $k=(k_{1},k_{2},\ldots,k_{n})\in\mathbb{R}^{n}$, we have to
show that for any $0<t_{1}<t_{2}<\ldots<t_{n}$ and $a>0$:
\begin{equation}
\mathbb{E}\left(\exp\left(i\left\langle \cdot,\sum_{j=1}^{n}k_{j}M_{-}^{\nicefrac{\alpha}{2}}1\!\!1_{[0,at_{j})}\right\rangle \right)\right)=\mathbb{E}\left(\exp\left(ia^{\nicefrac{\alpha}{2}}\left\langle \cdot,\sum_{j=1}^{n}k_{j}M_{-}^{\nicefrac{\alpha}{2}}1\!\!1_{[0,t_{j})}\right\rangle \right)\right).\label{eq:equality-ch-fc}
\end{equation}
It follows from (\ref{eq:characteristic-coord-proc}) that eq.\ (\ref{eq:equality-ch-fc})
is equivalent to
\[
E_{\beta}\left(-\frac{1}{2}\left|\sum_{j=1}^{n}k_{j}M_{-}^{\nicefrac{\alpha}{2}}1\!\!1_{[0,at_{j})}\right|_{L^{2}}^{2}\right)=E_{\beta}\left(-\frac{1}{2}\left|a^{\nicefrac{\alpha}{2}}\sum_{j=1}^{n}k_{j}M_{-}^{\nicefrac{\alpha}{2}}1\!\!1_{[0,t_{j})}\right|_{L^{2}}^{2}\right).
\]
Because of the complete monotonicity of $E_{\beta}$, the above equality
reduces to 
\[
\left|\sum_{j=1}^{n}k_{j}M_{-}^{\nicefrac{\alpha}{2}}1\!\!1_{[0,at_{j})}\right|_{L^{2}}^{2}=a^{\alpha}\left|\sum_{j=1}^{n}k_{j}M_{-}^{\nicefrac{\alpha}{2}}1\!\!1_{[0,t_{j})}\right|_{L^{2}}^{2}
\]
which is easy to show, taking into account (\ref{eq:alpha-inner-prod}).
A similar procedure may be applied in order to prove the stationarity
of the increments. Hence, for any $h\ge0$, we have to show that
\[
\mathbb{E}\left(\exp\left(i\sum_{j=1}^{n}k_{j}\big(B^{\beta,\alpha}(t_{j}+h)-B^{\beta,\alpha}(h)\big)\right)\right)=\mathbb{E}\left(\exp\left(i\sum_{j=1}^{n}k_{j}B^{\beta,\alpha}(t_{j})\right)\right).
\]
The above procedure reduces this equality to check the following 
\[
\left|\sum_{j=1}^{n}k_{j}M_{-}^{\nicefrac{\alpha}{2}}1\!\!1_{[h,t_{j}+h)}\right|_{L^{2}}^{2}=\left|\sum_{j=1}^{n}k_{j}M_{-}^{\nicefrac{\alpha}{2}}1\!\!1_{[0,t_{j})}\right|_{L^{2}}^{2}
\]
which is verified analogously as for the self-similarity.
\end{proof}
\begin{rem}
\label{rem:self-similar}The family $\{B^{\beta,\alpha}(t),\;t\ge0,\,\beta\in(0,1],\,\alpha\in(0,2)\}$
forms a class of $\nicefrac{\alpha}{2}$ self-similar process with
stationary increments ($\nicefrac{\alpha}{2}$-sssi) which includes:
\begin{enumerate}
\item For $\beta=\alpha=1$, the process $\{B^{1,1}(t),\;t\ge0\}$ is a
standard $d$-dimensional Brownian motion.
\item For $\beta=1$ and $0<\alpha<2$, $\{B^{1,\alpha}(t),\;t\ge0\}$ is
a $d$-dimensional fractional Brownian motion with Hurst parameter $\nicefrac{\alpha}{2}$.
\item For $\alpha=1$, $\{B^{\beta,1}(t),\;t\ge0\}$ is $\nicefrac{1}{2}$
self-similar non Gaussian process with
\begin{equation}
\mathbb{E}\left(e^{i\left(k,B^{\beta,1}(t)\right)}\right)=E_{\beta}\left(-\frac{|k|^{2}}{2}t\right),\quad k\in\mathbb{R}^{d}.\label{eq:ch-fc-1/2sssi}
\end{equation}
\item For $0<\alpha=\beta<1$, the process $\{B^{\beta}(t):=B^{\beta,\beta}(t),\;t\ge0\}$
is $\nicefrac{\beta}{2}$ self-similar and is called $d$-dimensional
grey Brownian motion (gBm for short). Its characteristic function
is given by 
\begin{equation}
\mathbb{E}\left(e^{i\left(k,B^{\beta}(t)\right)}\right)=E_{\beta}\left(-\frac{|k|^{2}}{2}t^{\beta}\right),\quad k\in\mathbb{R}^{d}.\label{eq:ch-fc-gBm}
\end{equation}
For $d=1$, gBm was introduced by W.\ Schneider in \cite{Schneider90,Sch92}.
\end{enumerate}
\end{rem}

\section{Distributions and characterization theorems}

\noindent There is a standard way to construct the test and distribution
spaces in non Gaussian analysis through Appell systems, the details
of this construction can be found in \cite{KSWY95}, \cite{GJRS14}, \cite{GJ15}
and references therein. In between the many choices of triples which
can be constructed we choose the Kondratiev triple 
\[
(S_d)_{\mu_{\beta}}^{1}\subset(H_{p})_{q,\mu_{\beta}}^{1}\subset L^{2}(\mu_{\beta})\subset(H_{-p})_{-q,\mu_{\beta}}^{-1}\subset(S_d)_{\mu_{\beta}}^{-1}.
\]
The space $(H_{p})_{q,\mu_{\beta}}^{1}$ is defined as the completion
of the $\mathcal{P}\big(S_d'\big)$ (the space of smooth
polynomials on $S_d'$) w.r.t.\ the norm $\|\cdot\|_{p,q,\mu_{\beta}}$
given by 
\[
\|\varphi\|_{p,q,\mu_{\beta}}^{2}:=\sum_{n=0}^{\infty}(n!)^{2}2^{nq}|\varphi^{(n)}|_{p}^{2},\quad p,q\in\mathbb{N}_{0},\;\varphi\in\mathcal{P}\big(S_d'\big).
\]
The dual space $(H_{-p})_{-q,\mu_{\beta}}^{-1}$ is a subset of $\mathcal{P}'\big(S_d'\big)$
such that if $\Phi\in(H_{-p})_{-q,\mu_{\beta}}^{-1}$, then 
\[
\|\Phi\|_{-p,-q,\mu_{\beta}}^{2}:=\sum_{n=0}^{\infty}2^{-nq}|\Phi^{(n)}|_{-p}^{2}<\infty,\quad p,q\in\mathbb{N}_{0}.
\]
The dual pairing between $\big(S_d'\big)_{\mu_{\beta}}^{-1}$ and $\big(S_d\big)_{\mu_{\beta}}^{1}$,
denoted by $\langle\!\langle\cdot,\cdot\rangle\!\rangle_{\mu_{\beta}}$
is a bilinear extension of scalar product in $L^{2}(\mu_{\beta})$.
For any $\varphi\in\big(S_d\big)_{\mu_{\beta}}^{1}$ and $\Phi\in\big(S_d'\big)_{\mu_{\beta}}^{-1}$
we have 
\[
\langle\!\langle\Phi,\varphi\rangle\!\rangle_{\mu_{\beta}}=\sum_{n=0}^{\infty}n!\langle\Phi^{(n)},\varphi^{(n)}\rangle.
\]
The set of $\mu_{\beta}$-exponentials 
\[
\left\{ e_{\mu_{\beta}}(\cdot,\varphi):=\frac{e^{\langle\cdot,\varphi\rangle}}{\mathbb{E}\big(e^{\langle\cdot,\varphi\rangle}\big)},\;\varphi\in S_{d,\mathbb{C}},\;|\varphi|_{p}<2^{-q}\right\} 
\]
forms a total set in $(H_{p})_{q,\mu_{\beta}}^{1}$ and for any $\varphi\in S_{d,\mathbb{C}}$
such that $|\varphi|_{p}<2^{-q}$ we have $\|e_{\mu_{\beta}}(\cdot,\varphi)\|_{p,q,\mu_{\beta}}<\infty$.

Let us introduce an integral transform, the $S_{\mu_{\beta}}$-transform,
which is used to characterize the spaces $(S_d)_{\mu_{\beta}}^{1}$
and $(S_d)_{\mu_{\beta}}^{-1}$. For any $\Phi\in(S_d)_{\mu_{\beta}}^{-1}$
and $\varphi\in U\subset S_{d,\mathbb{C}}$, where $U$
is a suitable neighborhood of zero, we define 
\[
S_{\mu_{\beta}}\Phi(\varphi):=\frac{\langle\!\langle\Phi,e^{\langle\cdot,\varphi\rangle}\rangle\!\rangle_{\mu_{\beta}}}{\mathbb{E}\big(e^{\langle\cdot,\varphi\rangle}\big)}=\frac{1}{E_{\beta}(\frac{1}{2}\langle\varphi,\varphi\rangle)}\langle\!\langle\Phi,e^{\langle\cdot,\varphi\rangle}\rangle\!\rangle_{\mu_{\beta}}.
\]
The characterization theorem for the space $(S_d)_{\mu_{\beta}}^{-1}$
via the $S_{\mu_{\beta}}$-transform is done using the spaces of holomorphic
functions on $S_{d,\mathbb{C}}$. We denote by $\mathrm{Hol}_{0}\big(S_{d,\mathbb{C}}\big)$
the space of holomorphic functions at zero where we identify two functions
which coincides in a neighborhood of zero. The space $\mathrm{Hol}_{0}\big(S_{d,\mathbb{C}}\big)$
is given as the inductive limit of a family of normed spaces, see
\cite{KSWY95} for the details and the proof of the following characterization
theorem. 
\begin{thm}[{cf.\ \cite[Theorem~8.34]{KSWY95}}]
 \label{theorem:Charact_distributions}The $S_{\mu_{\beta}}$-transform
is a topological isomorphism from $(S_d)_{\mu_{\beta}}^{-1}$ to $\mathrm{Hol}_{0}\big(S_{d,\mathbb{C}}\big)$.
\end{thm}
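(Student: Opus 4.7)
The plan is to follow the general strategy for characterization theorems in non-Gaussian analysis, constructing a bicontinuous bijection between $(S_d)^{-1}_{\mu_\beta}$ and $\mathrm{Hol}_0(S_{d,\mathbb{C}})$ by using the $\mu_\beta$-exponentials $e_{\mu_\beta}(\cdot,\varphi)$ as a total set of test functions together with the generalized Appell system associated to $\mu_\beta$. Every $\Phi\in(S_d)^{-1}_{\mu_\beta}$ lies in some $(H_{-p})_{-q,\mu_\beta}^{-1}$ and admits a kernel decomposition $\Phi=\sum_{n\ge 0}\langle \Phi^{(n)},P_n(\cdot)\rangle$ through the Appell system biorthogonal to the Taylor coefficients of $\varphi\mapsto e_{\mu_\beta}(\cdot,\varphi)$.

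First I would show that $S_{\mu_\beta}\Phi$ lands in $\mathrm{Hol}_0(S_{d,\mathbb{C}})$ and that the map is continuous. Applying $\Phi$ term-by-term to the Taylor expansion of $e_{\mu_\beta}(\cdot,\varphi)$ gives
\[
S_{\mu_\beta}\Phi(\varphi)=\sum_{n=0}^{\infty}\langle \Phi^{(n)},\varphi^{\otimes n}\rangle,
\]
and a Cauchy–Schwarz estimate against the norm $\|\Phi\|_{-p,-q,\mu_\beta}^{2}=\sum_n 2^{-nq}|\Phi^{(n)}|_{-p}^{2}$ yields
\[
|S_{\mu_\beta}\Phi(\varphi)|\le \|\Phi\|_{-p,-q,\mu_\beta}\Big(\sum_{n\ge 0}2^{nq}|\varphi|_{p}^{2n}\Big)^{1/2},
\]
which converges for $|\varphi|_{p}<2^{-q/2}$. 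This simultaneously delivers holomorphy on a zero-neighborhood and continuity of $S_{\mu_\beta}:(S_d)^{-1}_{\mu_\beta}\to \mathrm{Hol}_0(S_{d,\mathbb{C}})$.

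Next I would establish injectivity and surjectivity. Injectivity is immediate from the totality of the $\mu_\beta$-exponentials in $(H_p)_{q,\mu_\beta}^{1}$: if $S_{\mu_\beta}\Phi\equiv 0$ on a neighborhood of zero, then $\langle\!\langle \Phi,e_{\mu_\beta}(\cdot,\varphi)\rangle\!\rangle_{\mu_\beta}=0$ for all such $\varphi$, hence $\Phi=0$. For surjectivity, given $F\in \mathrm{Hol}_0(S_{d,\mathbb{C}})$ I would Taylor-expand $F$ at zero to obtain symmetric kernels $F^{(n)}\in (S_d^{\otimes n})'_{\mathbb{C}}$ satisfying Cauchy-type estimates $|F^{(n)}|_{-p}\le C\, r^{-n}$ on a polydisc where $F$ is bounded; these kernels are then used to define a candidate distribution $\Phi$ via the Appell system for $\mu_\beta$, and the Cauchy bound gives $\|\Phi\|_{-p,-q,\mu_\beta}<\infty$ for $q$ large enough so that $\sum_n 2^{-nq}r^{-2n}<\infty$. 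By construction $S_{\mu_\beta}\Phi=F$, and the bound on $\|\Phi\|$ in terms of $F$ provides continuity of the inverse.

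The main obstacle is the bookkeeping in the Appell-system step: one must verify that the non-Gaussian biorthogonal system built from the $\mu_\beta$-exponentials behaves, at the level of norm estimates, analogously to the Gaussian case, so that the norm $\|\cdot\|_{-p,-q,\mu_\beta}$ really matches the Cauchy bounds on Taylor coefficients of elements of $\mathrm{Hol}_0(S_{d,\mathbb{C}})$. Once this dictionary is in place, the rest of the argument is a routine term-by-term estimate, and one can invoke \cite[Theorem~8.34]{KSWY95} which was proved in exactly this abstract framework — so in practice the proof reduces to checking that $\mu_\beta$ satisfies the hypotheses of that general theorem, namely analyticity of its Laplace transform (a consequence of $E_\beta$ being entire) and the existence of the Appell system on a suitable neighborhood of zero.
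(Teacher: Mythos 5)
The paper offers no proof of this theorem: it is quoted verbatim from \cite[Theorem~8.34]{KSWY95}, and the text explicitly defers to that reference for the construction of $\mathrm{Hol}_{0}(S_{d,\mathbb{C}})$ and the proof. Your sketch correctly reproduces the standard argument of that reference (Appell expansion of $S_{\mu_\beta}\Phi$, the Cauchy--Schwarz estimate giving holomorphy and continuity, totality of the $\mu_\beta$-exponentials for injectivity, and Cauchy estimates on Taylor kernels for surjectivity), so it is essentially the same approach as the source the paper relies on. The only hypothesis of the general framework you omit when reducing to \cite{KSWY95} is the non-degeneracy of $\mu_\beta$ (positivity on nonempty open sets), which together with the analyticity and finiteness of the Laplace transform $\varphi\mapsto E_\beta\bigl(\tfrac{1}{2}\langle\varphi,\varphi\rangle\bigr)$ is what guarantees the Appell biorthogonal system exists; both conditions are verified for $\mu_\beta$ in \cite{GJRS14}.
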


As a corollary from the characterization theorem the following integration result can be deduced.
\begin{thm}\label{it}
Let $(T,\mathcal{B},\nu)$ be a measure space and $\Phi_t\in (S_d)_{\mu_{\beta}}^{-1}$ for all $t\in T$. Let $\mathcal{U}\subset S_{d,\mathbb{C}}$ be an appropriate neighbourhood of zero and $0<C<\infty$, such that
\begin{enumerate}
\item $S_{\mu_{\beta}}\Phi_{\cdot}(\xi):T\to \mathbb{C}$ is measurable for all $\xi \in \mathcal{U}$.
\item $\int_T\left| S_{\mu_{\beta}}\Phi_t(\xi)\right|\, d\nu(t)\le C$ for all $\xi\in \mathcal{U}$.
\end{enumerate}
Then, there exists $\Phi\in (S_d)_{\mu_{\beta}}^{-1}$ such that for all $\xi\in \mathcal{U}$
\[ S_{\mu_{\beta}}\Psi(\xi)=\int_TS_{\mu_{\beta}}\Phi_t(\xi)\, d\nu(t). \]
We denote $\Psi$ by $\int_T\Phi_t\, d\nu(t)$ and call it the weak integral of $\Phi$.
\end{thm}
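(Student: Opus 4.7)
My approach is to produce the candidate for $S_{\mu_{\beta}}\Psi$ directly from the hypotheses and then invoke the characterization Theorem \ref{theorem:Charact_distributions} to recover $\Psi$. Concretely, I would define
\[
F(\xi):=\int_{T}S_{\mu_{\beta}}\Phi_{t}(\xi)\,d\nu(t),\qquad \xi\in\mathcal{U}.
\]
If I can verify that $F\in\mathrm{Hol}_{0}(S_{d,\mathbb{C}})$, then Theorem \ref{theorem:Charact_distributions} returns a unique $\Psi\in(S_{d})_{\mu_{\beta}}^{-1}$ with $S_{\mu_{\beta}}\Psi=F$, which by definition is the weak integral $\int_{T}\Phi_{t}\,d\nu(t)$.

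First I would check that $F$ is well defined and bounded on $\mathcal{U}$: condition (1) gives $\nu$-measurability of the integrand for every $\xi\in\mathcal{U}$, while condition (2) provides absolute convergence and the uniform estimate $|F(\xi)|\le C$. Second I would establish holomorphy. By Theorem \ref{theorem:Charact_distributions}, each $\xi\mapsto S_{\mu_{\beta}}\Phi_{t}(\xi)$ is holomorphic on a neighbourhood of zero; after possibly shrinking $\mathcal{U}$ one may assume simultaneous holomorphy on $\mathcal{U}$. To pass holomorphy through the integral I would argue on one-dimensional complex slices: for $\xi_{0}\in\mathcal{U}$, $\eta\in S_{d,\mathbb{C}}$, and $r>0$ small enough that $\{\xi_{0}+z\eta:|z|\le r\}\subset\mathcal{U}$, and any closed rectifiable curve $\gamma\subset\{|z|<r\}$,
\[
\oint_{\gamma}F(\xi_{0}+z\eta)\,dz=\int_{T}\oint_{\gamma}S_{\mu_{\beta}}\Phi_{t}(\xi_{0}+z\eta)\,dz\,d\nu(t)=0,
\]
by Fubini (justified by condition (2) together with continuity of $S_{\mu_{\beta}}\Phi_{t}$ along $\gamma$) combined with Cauchy's theorem applied pointwise in $t$. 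Morera's theorem then upgrades this to holomorphy of every slice $z\mapsto F(\xi_{0}+z\eta)$, hence G\^ateaux-holomorphy of $F$ at zero, and together with the local boundedness already noted this places $F$ in $\mathrm{Hol}_{0}(S_{d,\mathbb{C}})$.

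The step I expect to be the main obstacle is precisely this holomorphy verification, because it requires a single neighbourhood $\mathcal{U}$ on which all the functions $S_{\mu_{\beta}}\Phi_{t}$ are simultaneously holomorphic and uniformly bounded in $L^{1}(\nu)$. This uniformity is exactly what condition (2) is designed to enforce, and it is what places $F$ inside one of the Banach building blocks of the inductive limit defining $\mathrm{Hol}_{0}(S_{d,\mathbb{C}})$ rather than in a family of germs whose domains depend on $t$. Once this uniformity is established, the inverse $S_{\mu_{\beta}}$-transform provided by Theorem \ref{theorem:Charact_distributions} finishes the proof.
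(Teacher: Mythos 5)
The paper offers no proof of this theorem, stating it only as a corollary of the characterization Theorem \ref{theorem:Charact_distributions} (the argument is standard, cf.\ \cite{KSWY95} and \cite{GJRS14}), and your proposal is exactly that standard deduction: define $F(\xi)=\int_T S_{\mu_\beta}\Phi_t(\xi)\,d\nu(t)$, verify $F\in\mathrm{Hol}_0(S_{d,\mathbb{C}})$ via local boundedness plus holomorphy on complex lines, and invert the $S_{\mu_\beta}$-transform. The one detail you leave implicit is that Morera's theorem also needs continuity of $z\mapsto F(\xi_0+z\eta)$, which is not immediate from the pointwise continuity of the integrands but does follow from your hypotheses via Cauchy estimates on a slightly larger circle combined with condition (2) (yielding a local Lipschitz bound for $F$ on the slice), so the argument goes through.
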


In the following we will use the $T_{\mu_{\beta}}$-transform which is defined as follows.
\begin{lem}
Let $\Phi \in (S_d)_{\mu_{\beta}}^{-1}$ and $p,q\in \mathbb{N}$ such that $\Phi \in  (H_{-p})_{-q,\mu_{\beta}}^{-1}$. Then, the $T_{\mu_{\beta}}$-transform given by
\[ T_{\mu_{\beta}}\Phi(\varphi)=\langle\!\langle \Phi, \exp\left( i\langle \cdot,\varphi\rangle\right) \rangle\!\rangle_{\mu_{\beta}} \]
is well-defined for $\varphi\in U_{p,q}$ and we have
\[  T_{\mu_{\beta}}\Phi(\varphi)=E_{\beta}\left(-\frac{1}{2}\langle \varphi,\varphi\rangle \right)S_{\mu_{\beta}}\Phi(i\varphi).\]
In particular, $T_{\mu_{\beta}}\Phi\in \mathrm{Hol}_{0}\big(S_{d,\mathbb{C}}\big)$ if and only if $S_{\mu_{\beta}}\in \mathrm{Hol}_{0}\big(S_{d,\mathbb{C}}\big)$. Moreover, Theorem \ref{it} also holds if the $S_{\mu_{\beta}}$-transform is replaced by the $T_{\mu_{\beta}}$-transform.
\end{lem}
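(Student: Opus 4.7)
The plan is to derive everything from the identity relating the two transforms, which in turn comes from a direct algebraic rewriting of $e^{i\langle\cdot,\varphi\rangle}$ as a $\mu_\beta$-exponential up to a scalar factor.

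First I would handle well-definedness and the identity together. Using the characteristic functional (\ref{eq:ch-fc-gnm}) and its holomorphic extension to $S_{d,\mathbb{C}}$, one has $\mathbb{E}\bigl(e^{\langle\cdot,i\varphi\rangle}\bigr)=E_\beta\bigl(-\tfrac12\langle\varphi,\varphi\rangle\bigr)$, so that
\[
e^{i\langle\cdot,\varphi\rangle}=E_\beta\!\left(-\tfrac12\langle\varphi,\varphi\rangle\right)e_{\mu_\beta}(\cdot,i\varphi).
\]
Since $|i\varphi|_p=|\varphi|_p$, whenever $\varphi\in U_{p,q}:=\{|\varphi|_p<2^{-q}\}$ the element $e_{\mu_\beta}(\cdot,i\varphi)$ lies in $(H_p)_{q,\mu_\beta}^{1}$ (this is exactly the totality statement for $\mu_\beta$-exponentials recalled just before the lemma). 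Hence $e^{i\langle\cdot,\varphi\rangle}\in(H_p)_{q,\mu_\beta}^{1}$ and the dual pairing with $\Phi\in(H_{-p})_{-q,\mu_\beta}^{-1}$ is well defined. Pairing both sides against $\Phi$ and invoking the definition of $S_{\mu_\beta}\Phi(i\varphi)$ immediately gives the claimed identity
\[
T_{\mu_\beta}\Phi(\varphi)=E_\beta\!\left(-\tfrac12\langle\varphi,\varphi\rangle\right)S_{\mu_\beta}\Phi(i\varphi).
\]

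Next I would deduce the holomorphy equivalence. The prefactor $\varphi\mapsto E_\beta(-\tfrac12\langle\varphi,\varphi\rangle)$ is entire on $S_{d,\mathbb{C}}$ (composition of an entire scalar function with a continuous quadratic form) and equals $1$ at $\varphi=0$, so it is invertible in some neighborhood of zero. The map $\varphi\mapsto i\varphi$ is a complex-linear automorphism. Therefore $T_{\mu_\beta}\Phi$ and $S_{\mu_\beta}\Phi$ differ by multiplication and precomposition with nonvanishing biholomorphic pieces near $0$, which means membership in $\mathrm{Hol}_0(S_{d,\mathbb{C}})$ is equivalent for the two. Combined with Theorem \ref{theorem:Charact_distributions}, this proves the characterization part.

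Finally, for the integration statement with $T_{\mu_\beta}$ in place of $S_{\mu_\beta}$, I would transfer the hypotheses through the identity. Suppose (i) $T_{\mu_\beta}\Phi_\cdot(\xi)$ is measurable for each $\xi$ in a neighborhood $\mathcal{U}$ of zero, and (ii) $\int_T|T_{\mu_\beta}\Phi_t(\xi)|\,d\nu(t)\le C$ on $\mathcal{U}$. On a possibly smaller neighborhood $\mathcal{U}'$ of zero the factor $E_\beta(-\tfrac12\langle\xi,\xi\rangle)$ is bounded away from $0$, so the identity yields measurability of $S_{\mu_\beta}\Phi_\cdot(i\xi)$ and a uniform $L^1(\nu)$ bound, hence by the change of variables $\eta=i\xi$ the same for $S_{\mu_\beta}\Phi_\cdot(\eta)$ on a neighborhood of zero. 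Theorem \ref{it} then produces $\Psi\in(S_d)_{\mu_\beta}^{-1}$ with $S_{\mu_\beta}\Psi(\eta)=\int_T S_{\mu_\beta}\Phi_t(\eta)\,d\nu(t)$, and applying the identity in reverse gives $T_{\mu_\beta}\Psi(\xi)=\int_T T_{\mu_\beta}\Phi_t(\xi)\,d\nu(t)$.

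I do not foresee a genuine obstacle here; the only point requiring a little care is the bookkeeping on the shrinking of the neighborhood of zero when passing between the $T$- and $S$-pictures, which is routine thanks to $E_\beta(0)=1$ and the continuity of $\varphi\mapsto E_\beta(-\tfrac12\langle\varphi,\varphi\rangle)$.
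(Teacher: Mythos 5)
Your proof is correct. Note that the paper itself gives no argument for this lemma (it simply defers to [GJRS14]); your route --- rewriting $e^{i\langle\cdot,\varphi\rangle}=E_{\beta}\left(-\tfrac{1}{2}\langle\varphi,\varphi\rangle\right)e_{\mu_{\beta}}(\cdot,i\varphi)$, pairing with $\Phi$, and then transferring holomorphy and the integrability hypotheses of Theorem \ref{it} through the resulting identity using that $E_{\beta}$ is entire with $E_{\beta}(0)=1$ and that the neighborhoods $U_{p,q}$ are balanced under $\varphi\mapsto i\varphi$ --- is exactly the standard argument the authors are invoking, and it fills in the omitted details correctly.
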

\noindent For details and proofs we refer to \cite{GJRS14}.

\section{Self-intersection local times for ggBm in dimension $d$}

In this section we consider the self-intersection local times for ggBm which is formally given by
\[
L^{\beta,\alpha}(t):=\int_{0}^{t}\int_{0}^{t}\delta(B^{\beta,\alpha}(s)-B^{\beta,\alpha}(u))\,du\,ds.
\]
The (generalized) random variable $L_{\alpha,\beta}(a,t)$ is intended to measure the amount of time in which the sample path of a ggBm spends intersecting itself within the time interval $[0,t]$. A priori the expression above has no mathematical meaning since Lebesgue integration of Dirac delta distribution is not defined. We will prove that we can make sense of this object as a weak integral in Kondratiev distribution space. 
\begin{thm}
Let $0<\alpha<1$, $0<\beta \le 1$, and $d\in \mathbb{N}$ be such that $d\alpha <2$. Then, $L^{\beta,\alpha}(t)$ is a well defined element in $(S_{d})_{\mu_{\beta}}^{-1}$ in the weak sense. 
\end{thm}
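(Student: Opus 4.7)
The plan is to realise $L^{\beta,\alpha}(t)$ as a weak integral in $(S_d)^{-1}_{\mu_\beta}$ via Theorem~\ref{it} in its $T_{\mu_\beta}$-form. Starting from the Fourier representation $\delta(y)=(2\pi)^{-d}\int_{\mathbb{R}^d}e^{i(k,y)}\,dk$, I would interpret
\[
L^{\beta,\alpha}(t)=(2\pi)^{-d}\int_0^t\!\!\int_0^t\!\!\int_{\mathbb{R}^d} e^{i(k,B^{\beta,\alpha}(s)-B^{\beta,\alpha}(u))}\,dk\,du\,ds
\]
as a weak integral over $[0,t]^2\times\mathbb{R}^d$ with Lebesgue measure. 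After checking joint measurability (immediate from continuity of $E_\beta$), the task reduces to producing a uniform $L^1$-bound for the $T_{\mu_\beta}$-transform of the integrand on a suitable neighbourhood $\mathcal{U}$ of zero in $S_{d,\mathbb{C}}$.

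To compute the $T$-transform of the integrand I would use the characteristic functional (\ref{eq:ch-fc-gnm}): setting $\eta_{s,u,k}:=(k_j M_-^{\nicefrac{\alpha}{2}}1\!\!1_{[u\wedge s,u\vee s)}\otimes e_j)_{j=1}^d\in L_d^2$, one has
\[
T_{\mu_\beta}\!\bigl[e^{i(k,B^{\beta,\alpha}(s)-B^{\beta,\alpha}(u))}\bigr](\varphi)=E_\beta\!\Bigl(-\tfrac12\bigl|\eta_{s,u,k}+\varphi\bigr|_0^2\Bigr),
\]
extended analytically to $\varphi\in S_{d,\mathbb{C}}$ since $E_\beta$ is entire. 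I would then invoke the Laplace representation (\ref{eq:LaplaceT_MWf}) $E_\beta(-x)=\int_0^\infty e^{-x\tau}M_\beta(\tau)\,d\tau$, apply Fubini, complete the square in the Gaussian in $k$, and exploit the identity $|M_-^{\nicefrac{\alpha}{2}}1\!\!1_{[u\wedge s,u\vee s)}|_{L^2}^2=|s-u|^\alpha$, which follows from (\ref{eq:alpha-inner-prod}). Carrying out the $k$-integration yields
\[
\int_0^\infty(2\pi\tau|s-u|^\alpha)^{-\nicefrac{d}{2}}\,e^{-\tau\lambda(s,u;\varphi)/2}\,M_\beta(\tau)\,d\tau,
\]
where $\lambda(s,u;\varphi)=\langle\varphi,\varphi\rangle_0-\sum_{j=1}^d a_j(s,u;\varphi)^2/|s-u|^\alpha$ and $a_j(s,u;\varphi)=(M_-^{\nicefrac{\alpha}{2}}1\!\!1_{[u\wedge s,u\vee s)},\varphi_j)_{L^2}$.

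The decisive step is then the joint integration over $(\tau,s,u)$. Cauchy--Schwarz combined with the same identity gives $|a_j(s,u;\varphi)|^2\le|s-u|^\alpha|\varphi_j|_{L^2}^2$, so on a sufficiently small neighbourhood of zero the quadratic form $\operatorname{Re}\lambda(s,u;\varphi)$ stays non-negative and the $\varphi$-dependent exponential remains harmless. The only non-trivial $(s,u)$-dependence surviving outside the $\tau$-integral is the prefactor $|s-u|^{-d\alpha/2}$, which is Lebesgue-integrable on $[0,t]^2$ exactly when $d\alpha<2$---the theorem's hypothesis. I expect the main obstacle to lie in the rigorous extension of the Gaussian $k$-integration to complex $\varphi$ (via a contour-shift argument, legitimate because the real part of the quadratic coefficient is $\tau|s-u|^\alpha/2>0$) and in packaging the $|s-u|^{-d\alpha/2}$ diagonal singularity together with the $\tau$-integral against $M_\beta$ into a single bound uniform on $\mathcal{U}$. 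Once this estimate is in place, Theorem~\ref{it} immediately delivers $L^{\beta,\alpha}(t)\in(S_d)^{-1}_{\mu_\beta}$.
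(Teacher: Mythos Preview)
Your proposal is correct and follows essentially the same route as the paper: Fourier representation of $\delta$, $T_{\mu_\beta}$-transform, the Laplace representation $E_\beta(-x)=\int_0^\infty e^{-x\tau}M_\beta(\tau)\,d\tau$, Gaussian integration in the $k$-variable, Cauchy--Schwarz on the cross term $\langle\varphi_j,\eta_s-\eta_u\rangle$, and finally Theorem~\ref{it} together with the integrability of $|s-u|^{-d\alpha/2}$ on $[0,t]^2$ for $d\alpha<2$. The step you single out as the main obstacle---uniformly controlling the remaining $\tau$-integral against $M_\beta$ on a neighbourhood $\mathcal{U}$---is exactly where the paper invokes Lemma~A.4 of \cite{GJRS14}.
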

\begin{proof}
Using the representation
\[
\delta(B^{\beta,\alpha}(s)-B^{\beta,\alpha}(u))=\left(\frac{1}{2\pi}\right)^{d/2}\int_{\mathbb{R}^{d}}e^{i(\lambda,B^{\beta,\alpha}(s)-B^{\beta,\alpha}(u))}\,d\lambda
\]
and denoting $\eta_{x}:=M_{-}^{\nicefrac{\alpha}{2}}1\!\!1_{[0,x)}$ we compute for any $\varphi \in \mathcal{U}$ the $T_{\mu_{\beta}}$-transform of $L^{\beta,\alpha}(t)$ to obtain
with 
\begin{eqnarray*}
(T_{\mu_{\beta}}L^{\beta,\alpha}(t))(\varphi) & = & \left(\frac{1}{2\pi}\right)^{d/2}\int_{0}^{t}\int_{0}^{t}T_{\mu_{\beta}}\delta(B^{\beta,\alpha}(s)-B^{\beta,\alpha}(u))(\varphi)\,du\,ds\\
 & = & \left(\frac{1}{2\pi}\right)^{d/2}\int_{0}^{t}\int_{0}^{t}\prod_{i=1}^{d}\bigg[\int_{\mathbb{R}}E_{\beta}\bigg(-\frac{1}{2}\lambda_{i}^{2}|\eta_{s}-\eta_{u}|^{2}-\frac{1}{2}\langle\varphi_{i},\varphi_{i}\rangle\\
 &  & -\lambda_{i}\langle\varphi_{i}\eta_{t}-\eta_{u}\rangle\bigg)d\lambda_{i}\bigg]du\,ds.
\end{eqnarray*}
By using the Laplace transform of $E_{\beta}$ (\ref{eq:LaplaceT_MWf}) and computing the Gaussian integral yields 
\begin{eqnarray*}
 &  & \int_{\mathbb{R}}E_{\beta}\bigg(-\frac{1}{2}\lambda_{i}^{2}|\eta_{s}-\eta_{u}|^{2}-\frac{1}{2}\langle\varphi_{i},\varphi_{i}\rangle-\lambda_{i}\langle\varphi_{i},\eta_{t}-\eta_{u}\rangle\bigg)d\lambda_{i}\\
 & = & \int_{\mathbb{R}}\int_{0}^{\infty}M_{\beta}(\tau)\exp\left(-\tau\left(\frac{1}{2}\lambda_{i}^{2}|\eta_{s}-\eta_{u}|^{2}+\frac{1}{2}\langle\varphi_{i},\varphi_{i}\rangle+\lambda_{i}\langle\varphi_{i},\eta_{t}-\eta_{u}\rangle\right)\right)d\tau d\lambda_{i}\\
 & = & \int_{0}^{\infty}d\tau M_{\beta}(\tau)\int_{\mathbb{R}}\exp\left(-\tau\left(\frac{1}{2}\lambda_{i}^{2}|\eta_{s}-\eta_{u}|^{2}+\frac{1}{2}\langle\varphi_{i},\varphi_{i}\rangle+\lambda_{i}\langle\varphi_{i},\eta_{t}-\eta_{u}\rangle\right)\right)d\lambda_{i}\\
 & = & \exp\left(\frac{1}{2}\langle\varphi_{i},\varphi_{i}\rangle\right)\int_{0}^{\infty}M_{\beta}(\tau)\sqrt{\frac{2\pi}{\tau|\eta_{s}-\eta_{u}|^{2}}}\exp\left(\frac{\tau^{2}\langle\varphi_{i},\eta_{t}-\eta_{u}\rangle^{2}}{2\tau|\eta_{s}-\eta_{u}|^{2}}\right)d\tau.
\end{eqnarray*}
Next, by using Lemma A.4 from \cite{GJRS14} and applying the Cauchy-Schwarz inequality we obtain
\begin{eqnarray*}
 &  & \int_{\mathbb{R}}E_{\beta}\bigg(-\frac{1}{2}\lambda_{i}^{2}|\eta_{s}-\eta_{u}|^{2}-\frac{1}{2}\langle\varphi_{i},\varphi_{i}\rangle-\lambda_{i}\langle\varphi_{i},\eta_{t}-\eta_{u}\rangle\bigg)d\lambda_{i}\\
 & = & \left(\frac{2\pi}{|\eta_{s}-\eta_{u}|^{2}}\right)\exp\left(\frac{1}{2}\langle\varphi_{i},\varphi_{i}\rangle\right)\int_{0}^{\infty}\tau^{-\nicefrac{1}{2}}M_{\beta}(\tau)\exp\left(\frac{\tau\langle\varphi_{i},\eta_{t}-\eta_{u}\rangle^{2}}{2|\eta_{s}-\eta_{u}|^{2}}\right)d\tau\\
 & \le & \left(\frac{2\pi}{|\eta_{s}-\eta_{u}|^{2}}\right)\exp\left(\frac{1}{2}\langle\varphi_{i},\varphi_{i}\rangle\right)\int_{0}^{\infty}\tau^{-\nicefrac{1}{2}}M_{\beta}(\tau)\exp\left(\frac{\tau|\varphi_{i}|^{2}}{2}\right)d\tau\\
 & \leq & K\left(\frac{2\pi}{|\eta_{s}-\eta_{u}|^{2}}\right)\exp\left(\frac{1}{2}\langle\varphi_{i},\varphi_{i}\rangle\right).
\end{eqnarray*}
Putting all together gives
\begin{eqnarray*}
|(T_{\mu_{\beta}}L^{\beta,\alpha}(t))(\varphi)| & \leq & \left(\frac{1}{2\pi}\right)^{d/2}\int_{0}^{t}\int_{0}^{t}\prod_{i=1}^{d}K\frac{\sqrt{2\pi}}{|\eta_{s}-\eta_{u}|}\exp\left(\frac{1}{2}\langle\varphi_{i},\varphi_{i}\rangle\right)duds\\
 & = & K\exp\left(\frac{1}{2}|\varphi|^{2}\right)\int_{0}^{t}\int_{0}^{t}\left(\frac{1}{|\eta_{s}-\eta_{u}|^{2}}\right)^{\nicefrac{d}{2}}duds\\
 & = & 2K\exp\left(\frac{1}{2}|\varphi|^{2}\right)\int_{0}^{t}\int_{0}^{s}(s-u)^{-\nicefrac{\alpha d}{2}}duds\\
\end{eqnarray*}
The last integral in finite for $\alpha d<2$. The announced result now follows by an application of Theorem \ref{it}.
\end{proof}
\section{Conclusion}
In this paper we have studied self-intersection local times of ggBm for the case $d\alpha <2$ and characterized it as a Mittag-Leffler distribution in a suitable distribution space. The case $ d\alpha <2$ corresponds for the Gaussian case $\beta=1$ to the case $Hd<1$. Indeed in this case \cite{HN05} showed that the self-intersection local time is a square integrable function. For $d\alpha \geq 1$ further renormalizations are needed, like e.g.~centering of the random variable. These considerations for the case of ggBm are postponed for a forthcoming paper.

\bibliographystyle{alpha}

\end{document}